\documentclass[draft,oneside,11pt,reqno]{amsart}
\usepackage[a4paper,margin=30mm]{geometry}
\usepackage{amssymb}
\usepackage{amsxtra}
\usepackage[mathscr]{eucal}
\usepackage{graphics}
\usepackage{mathtools}
\usepackage[nobysame]{amsrefs}
\usepackage{color}
\usepackage{bm}

\makeatletter
\@namedef{subjclassname@2020}{\textup{2020} Mathematics Subject Classification}
\makeatother

\allowdisplaybreaks

\numberwithin{equation}{section}

\newtheorem{theorem}{Theorem}[section]

\newtheorem{corollary}[theorem]{Corollary}

\theoremstyle{definition}

\newtheorem{definition}[theorem]{Definition}

\newtheorem{examples}[theorem]{Examples}

\newtheorem{question}[theorem]{Question}

\renewcommand{\ge}{\geqslant}
\renewcommand{\le}{\leqslant}
\newcommand{\<}{\langle}
\renewcommand{\>}{\rangle} 
\renewcommand{\emptyset}{\varnothing}

\renewcommand{\setminus}{\smallsetminus}
\renewcommand{\SS}{\mathbb{S}}

\DeclareMathAlphabet{\mathsfit}{T1}{\sfdefault}{\mddefault}{\sldefault}
\SetMathAlphabet{\mathsfit}{bold}{T1}{\sfdefault}{\bfdefault}{\sldefault}

\newcommand{\ag}{\alpha_g}

\newcommand{\al}{\alpha}

\newcommand{\bfk}{\mathbf{k}}
\newcommand{\bfm}{\mathbf{m}}
\newcommand{\bfn}{\mathbf{n}}

\newcommand{\brm}{B_{R,M}}

\newcommand{\bx}{\mathbf{x}}
\newcommand{\brs}{B_R(\sss)}
\newcommand{\brt}{B_R(\sst)}
\newcommand{\bru}{B_R(\ssu)}
\newcommand{\CC}{\mathbb{C}}

\newcommand{\dimension}{\operatorname{dim}}
\newcommand{\dist}{\operatorname{dist}}

\renewcommand{\epsilon}{\varepsilon}

\newcommand{\fsh}{f^{\#}}
\newcommand{\fst}{f^{*}}

\newcommand{\lzdz}{\ell^1(\zd,\ZZ)}
\newcommand{\lzdr}{\ell^1(\zd,\RR)}
\newcommand{\lizdr}{\ell^\infty(\zd,\RR)}

\newcommand{\kgr}{\|\bfk\|\ge R}
\newcommand{\mgr}{\|\bfm\|\ge R}
\newcommand{\ngr}{\|\bfn\|\ge R}

\newcommand{\mzd}{\bfm\in\zd}
\newcommand{\nzd}{\bfn\in\zd}

\newcommand{\rd}{R_d}

\newcommand{\RR}{\mathbb R}

\newcommand{\sd}{\SS^{d}}
\newcommand{\sig}{\sigma}
\newcommand{\sbar}{\overline{\sig}}
\newcommand{\sg}{\sigma_g}

\newcommand{\ssf}{\mathcal{F}}
\newcommand{\ssp}{\mathcal{P}}
\newcommand{\sss}{\mathcal{S}}
\newcommand{\sst}{\mathcal{T}}
\newcommand{\ssu}{\mathcal{U}}
\newcommand{\supp}{\operatorname{supp}}

\newcommand{\TT}{\mathbb{T}}
\newcommand{\tzd}{\TT^{\zd}}

\newcommand{\U}{\mathsf{U}}

\newcommand{\wst}{w^{*}}

\newcommand{\xg}{X_g}
\newcommand{\ZZ}{\mathbb{Z}}

\newcommand{\zd}{\ZZ^{d}}

\newcommand{\zxd}{\ZZ[x_1^{\pm1},\dots,x_d^{\pm 1}]}

	\begin{document}
\title[Divisibility and Lacunary Independence]{Divisibility of Integer Laurent Polynomials,
	\\
Homoclinic Points, and Lacunary Independence}

\author[Lind]{Douglas Lind}
\address{Douglas Lind: Department of Mathematics, University of
Washington, Seattle, Washington 98195, USA}
\email{lind@math.washington.edu}

\author[Schmidt]{Klaus Schmidt}
\address{Klaus Schmidt: Mathematics Institute,
University of Vienna, Oskar-Morgenstern-Platz 1, A-1090 Vienna, Austria}
\email{klaus.schmidt@univie.ac.at}

\dedicatory{Dedicated to the memory of Professor K.\ R.\ Parthasarthy}

\date{\today}

\keywords{Algebraic action, lacunary independence, homoclinic point, atoral polynomial}

\subjclass[2020]{Primary: 37A15, 13F20, 37A44; Secondary: 37B40,
13F20}


	\begin{abstract}
Let $f$, $p$, and $q$ be Laurent polynomials with integer coefficients in one or several variables, and suppose that $f$ divides $p+q$. We establish sufficient conditions to guarantee that $f$ individually divides $p$ and $q$. These conditions involve a bound on coefficients, a separation between the supports of $p$ and $q$, and, surprisingly, a requirement on the complex variety of $f$ called atorality satisfied by many but not all polynomials.

Our proof involves a related dynamical system and the fundamental dynamical notion of homoclinic point. Without the atorality assumption our methods fail, and it is unknown whether our results hold without this assumption.

We use this to establish exponential recurrence of the related dynamical system, and conclude with some remarks and open problems.
	\end{abstract}

\maketitle

\section{Introduction}
	\label{sec:introduction}

We begin with some notation and terminology, and then state our main results. In the next section we describe the dynamical context that underlies our proofs.

Let $d\ge1$, and let $\rd = \zxd$ denote the ring of Laurent polynomials with integer coefficients in $d$ commuting variables. We write $f\in\rd$ as
$f=\sum_{\nzd} f_{\bfn}\bx^{\bfn}$, where $\bx = (x_1,\dots,x_d)$, $\bfn=(n_1,\dots,n_d)$,
$\bx^{\bfn}= x_1^{n_1}\cdots x_d^{n_d}$,
and $f_{\bfn}\in\ZZ$ with $f_{\bfn}=0$ for all but finitely many $\bfn$. The units in $\rd$ are those elements of the form $\pm\, \bx^{\bfn}$. An element of $\rd$ is \emph{irreducible} provided that it is neither a unit nor a product of two non-units in ~$\rd$. We let $\<f\> = fR_d$ denote the principal ideal in $\rd$ generated by ~$f$. The \emph{support} of $f$ is defined as $\supp f = \{\nzd\colon f_{\bfn}\ne 0\}$, and we set $\|f\|_{\infty} = \max_{\nzd} |f_{\bfn}|$. The \emph{adjoint} $\fst$ of $f$ is the polynomial with coefficients defined by $\fst_{\bfn} = f_{-\bfn}^{}$. Let $\|\bfn\| = \max_{1\le j\le d}\{|n_j|\}$.
For nonempty subsets $\sss$ and $\sst$ of $\zd$ we define their distance to be $\dist(\sss,\sst) = \min_{\bfm\in\sss, \bfn\in\sst} \|\bfm-\bfn\|$.

Maps $v\colon\zd\to\RR$ are denoted by $v = (v_{\bfn})_{\nzd}$. We let $\lzdr$ be the space of all such maps $v$ for which $\|v\|_1 = \sum_{\nzd} |v_{\bfn}|<\infty$. This space is equipped with the convolution product
	\begin{equation}
	\label{eqn:convolution}
	(v\cdot w)_{\bfn}= \sum_{\mzd} v_{\bfm}w_{\bfn-\bfm} = \sum_{\mzd} v_{\bfn-\bfm}w_{\bfm} .
	\end{equation}
If we view each $f = \sum_{\nzd}f_{\bfn}\bx^{\bfn}\in R_d$ as the element $(f_{\bfn})_{\nzd}\in \lzdz\subset\lzdr$, we obtain an embedding $\rd \hookrightarrow \lzdr$ in which multiplication of Laurent polynomials agrees with the convolution product \eqref{eqn:convolution} in $\lzdr$.

In this paper we investigate divisibility of elements of $\rd$ by a given irreducible element $f\in\rd$. Somewhat surprisingly, our results hinge on a property of $f$ called \emph{atorality}. Let $\TT=\RR/\ZZ$ denote the additive torus, and $\SS = \{z\in\CC\colon |z|=1\}$ be its multiplicative counterpart.
	\begin{definition}
	\label{def:unitary}
The \emph{unitary variety} of $f\in\rd$ is defined to be
	\begin{displaymath}
\U(f) = \{(t_1,\dots,t_d) \in\TT^d \colon f(e^{2\pi i t_1},\dots,e^{2\pi i t_d})=0 \}.
	\end{displaymath}
	\end{definition}
\noindent
We remark that this is an additive version of the more usual definition of unitary variety as the intersection of the complex variety of $f$ with $\SS^d$.

According to the discussion in \cite{Lind-atoral}*{\S1}, the unitary variety $\U(f)$ (or, more accurately, its multiplicative counterpart) is a real semi\-algebraic set. By the cell decomposition theorem for such sets, it is a finite disjoint union of cells of various dimensions. We can therefore define its dimension $\dim \U(f)$ to be the maximal dimension of these cells, and this is known to be the same for all possible cell decompositions. If $\dim \U(f)=d$, then $f$ vanishes on an open subset of $\sd$, and by holding all variables but one fixed, it is easy to show that $f=0$.

	\begin{definition}
	\label{def:atoral}
A  Laurent polynomial $f\in\rd$ is \emph{atoral} if $\dimension \U(f)\le d-2$. This includes the possibility that $\U(f)=\emptyset$. If $\dim \U(f) = d-1$ then $f$ is called \emph{toral}.
	\end{definition}

These notions were introduced by Agler, Starkus, and McCarthy \cite{Agler-toral} in the context of several complex variables.

	\begin{examples}
	\label{exam:toral}
	(1) Every $f\in\rd$ with $\U(f)=\emptyset$ is atoral. The dynamical consequences of this case were extensively studied in \cite{Lind-homoclinic}.
	
	(2) If $f\in R_1$, then $f$ is atoral if and only if $\U(f)=\emptyset$, i.e., $f$ has no roots of absolute value 1. For instance, $x^2-x-1$ is atoral, while $x^4-x^3-x^2-x+1$ is toral \cite{Lind-atoral}*{Exam.\ 4.2}.
	
	(3) If $f(x,y)=1+x+y \in R_2$, then $\U(f)=\{(\frac{1}{3},\frac{2}{3}), (\frac{2}{3},\frac{1}{3})\}$ with dimension 0, and so $f$ is atoral.
	
	(4) If $f(x,y) = 3+x+y+x^{-1}+y^{-1}\in R_2$, then according to \cite{Lind-atoral}*{Exam. 4.2}
	\begin{displaymath}
	 \U(f) = \bigl\{(s,t)\in\TT^2: t = \pm \arccos\bigl( \tfrac{3}{2} - \cos 2\pi s\bigr), \ -\tfrac{1}{6}\le s\le\tfrac{1}{6} \bigr\},
	\end{displaymath}
	a smooth closed curve of dimension 1. Hence $f$ is toral.
	
	(5) If $f(x,y,z)=1+x+y+z\in R_3$, then $\U(f)$ is a union of three 1-dimensional circles \cite{Lind-atoral}*{Exam.\ 4.6}, and so $f$ is atoral.
	
	(6) A consequence of \cite{Lind-atoral}*{\S 2} is that if $f\in\rd$ is irreducible and if $\fst$ is not a unit times ~$f$, then $f$ is atoral. Thus it is very easy to create atoral examples.
	\end{examples}

Being atoral turns out to be equivalent to an invertibility property in $\lzdr$ established in \cite{Lind-atoral}*{Prop.~2.2 and Rem.~ 2.7}.

	\begin{theorem}
	\label{thm:quasi-inverse}
Let $0\ne f\in\rd$. Then $f$ is atoral if and only if there exists an element $\fsh\in\lzdr$ such that $\fsh \cdot f\in\rd\smallsetminus \<f\>$.
	\end{theorem}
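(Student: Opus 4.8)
The plan is to analyze the Fourier-analytic meaning of both conditions on the torus $\TT^d$ and connect them via the function $F(\bx)=f(e^{2\pi i t_1},\dots,e^{2\pi i t_d})$, viewed as an element of the Wiener algebra $A(\TT^d)$ of absolutely convergent Fourier series, which is precisely the image of $\lzdr$ under the Fourier transform $v\mapsto \widehat v$ with $\widehat v(\bx)=\sum_{\nzd} v_{\bfn}e^{2\pi i\langle\bfn,\bx\rangle}$. Under this identification, convolution in $\lzdr$ becomes pointwise multiplication of Fourier series, $\rd$ sits inside $A(\TT^d)$ as the trigonometric polynomials, and the zero set of $\widehat f$ is exactly $\U(f)$. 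So the theorem becomes: $\dim\U(f)\le d-2$ iff there is $\widehat{\fsh}\in A(\TT^d)$ with $\widehat{\fsh}\,\widehat f$ a trigonometric polynomial that is \emph{not} a multiple of $f$ in $\rd$.

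For the forward direction (atoral $\Rightarrow$ quasi-inverse exists), I would look for $\fsh$ so that $\fsh\cdot f$ equals some fixed nonzero $h\in\rd$ outside $\langle f\rangle$ — equivalently $\widehat{\fsh}=\widehat h/\widehat f$ should lie in $A(\TT^d)$. The point of atorality is that $\U(f)$, being a semialgebraic set of codimension $\ge 2$, has measure zero and, more importantly, does not disconnect $\TT^d$ and is "thin" enough that one can choose $h$ vanishing on $\U(f)$ to sufficiently high order (e.g. a suitable power of $f$ times $\fst$, or $f\fst$ itself when $\fst$ is not a unit multiple of $f$, cf. Example~\ref{exam:toral}(6)) so that the quotient $\widehat h/\widehat f$ extends to a function with absolutely summable Fourier coefficients. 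Concretely I expect one reduces, via Lojasiewicz-type inequalities bounding $|\widehat f|$ from below by a power of $\dist(\cdot,\U(f))$ and smoothness estimates on $\widehat h$ near $\U(f)$, to showing that $\widehat h/\widehat f$ is $C^k$ for $k>d/2$, which by the classical Bernstein–Sobolev embedding forces membership in $A(\TT^d)$; the codimension-$\ge 2$ hypothesis is exactly what makes the singular set small enough for this regularity count to close. One must also verify $h\notin\langle f\rangle$, which follows because $\deg h$ or $\supp h$ can be controlled — e.g. if $h=f\fst$ then any multiple $fg$ with $g$ a nonunit has strictly larger Newton polytope unless $g$ is a unit, contradicting $\fst$ being a nonunit when $f$ is not essentially self-adjoint; the general atoral case needs the finer argument of \cite{Lind-atoral}.

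For the converse (quasi-inverse exists $\Rightarrow$ atoral), suppose $\dim\U(f)=d-1$, so $\U(f)$ contains a $(d-1)$-cell $C$ on which $\widehat f$ vanishes. If $\widehat{\fsh}\in A(\TT^d)$ satisfies $\widehat{\fsh}\widehat f=\widehat h$ with $h\in\rd$, then $\widehat h$ vanishes on $C$; since $A(\TT^d)$ functions are continuous and $C$ is a hypersurface, I would argue that a real-analytic (indeed polynomial) $\widehat h$ vanishing on a codimension-$1$ real cell of $\U(f)$ must be divisible, in $\rd$, by the minimal polynomial cutting out that cell — and when $f$ is irreducible with $\U(f)$ genuinely $(d-1)$-dimensional this minimal "toral part" is $f$ itself (up to the ideal), forcing $h\in\langle f\rangle$ and contradicting $h\in\rd\setminus\langle f\rangle$. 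The main obstacle, and the step needing the most care, is this converse divisibility claim: passing from "$\widehat h$ vanishes on a semialgebraic piece of $\U(f)$" to "$f\mid h$ in $\rd$" requires knowing that $\U(f)$ is large enough to be Zariski-dense in the complex hypersurface $\{f=0\}$ (true exactly in the toral case by a dimension-count over $\RR$ versus $\CC$), together with an application of the Nullstellensatz and irreducibility of $f$; getting the reduction clean — rather than hand-waving past lower-dimensional strata and multiplicities — is where the real work lies, and it is presumably handled in \cite{Lind-atoral}*{Prop.~2.2} whose proof I would follow.
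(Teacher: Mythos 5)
Your overall scheme for the forward direction --- find $h\in\rd\setminus\<f\>$ vanishing to high order on $\U(f)$, show $\widehat h/\widehat f$ is regular enough (Łojasiewicz lower bound on $|\widehat f|$ plus a $C^k$, $k>d/2$, criterion) to have absolutely convergent Fourier series, and let $\fsh$ be its coefficient sequence --- is exactly the paper's own sketch, which defers all details to \cite{Lind-atoral}*{Prop.~2.2 and Rem.~2.7}. But your concrete candidates for $h$ cannot work: $f\fst$, and any power of $f$ times $\fst$, lie in $\<f\>$ by construction, so they are disqualified from the outset, and the Newton-polytope argument you give for $f\fst\notin\<f\>$ is arguing for a false statement. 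What you presumably intend is a power of the adjoint alone: since $\fst(e^{2\pi i t})=\overline{f(e^{2\pi i t})}$ we have $\U(\fst)=\U(f)$, and if $f$ is irreducible with $\fst$ not a unit times $f$, then $\<f\>$ is prime, so $(\fst)^k\notin\<f\>$ while vanishing on $\U(f)$ to order $k$; your regularity count then applies. Even after this repair, the substantive point of the forward direction --- producing \emph{some} $h\notin\<f\>$ vanishing to high order on $\U(f)$ for an arbitrary atoral $f$ (e.g.\ an irreducible one with $\fst$ a unit times $f$ and $\U(f)\ne\emptyset$) --- is precisely what you defer to the reference; and it is there that the hypothesis $\dim\U(f)\le d-2$ enters, not, as you suggest, in making the regularity count close.

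In the converse you name the right ingredients but do not carry out the decisive step: from the vanishing of $\widehat h=\widehat{\fsh}\,\widehat f$ on a $(d-1)$-cell of $\U(f)$ you need that the complex Zariski closure of that cell is a full-dimensional component of the variety of $f$ (this uses that $\SS^d$ is totally real in $\cstd$, so a complex subvariety of dimension at most $d-2$ meets it in a set of real dimension at most $d-2$), and then Galois descent --- every $\overline{\QQ}$-conjugate factor of $f$ divides the rational $h$ --- to conclude $f\mid h$. Moreover, your argument, like the reference's, requires $f$ irreducible, whereas the statement as printed allows any nonzero $f$; in that generality the ``only if'' direction fails for the dimension definition of atorality: for $d=1$ and $f=2(x-1)$ one has $\dim\U(f)=0=d-1$, yet $\fsh=\tfrac12\delta_0\in\lzdr$ gives $\fsh\cdot f=x-1\in R_1\setminus\<f\>$. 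So your tacit restriction to irreducible $f$ (where the dimension definition agrees with the ideal-theoretic one used in \cite{Lind-atoral}) is in fact forced, but it should have been flagged, since it means the theorem in its literal generality cannot be established by any argument.
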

\noindent
The proof consists of using atorality to find an $h\in\rd\smallsetminus \<f\>$ so that $h/f$ is smooth enough on $\SS^d$ to have absolutely convergent Fourier series, and then using the resulting Fourier coefficients to define $\fsh$. The element $\fsh$ is obviously not unique. We shall call any such $\fsh$ a \emph{quasi-inverse} of $f$.

For every nonempty subset $\sss\subset\zd$ and integer $H\ge1$ we let
	\begin{equation}
	\label{eq:PSH}
		\ssp(\sss, H) = \{p\in\rd\colon \supp(p)\subseteq\sss {\text{ and }} \|p\|_{\infty} \le H\}.
	\end{equation}

The following result was proved in \cite{Fan-bohr}*{Thm.\ 7.3}.

	\begin{theorem}[Gap Theorem]
	\label{thm:gap}
Let $f\in\rd$ be irreducible and atoral. For every $H\ge1$ there exists a number $M = M(f,H)\ge1$ with the following property. Suppose that $\sss$, $\sst$ are nonempty subsets of $\zd$ with $\dist(\sss,\sst)\ge M$, and that $p\in \ssp(\sss,H)$ and $q\in\ssp(\sst,H)$. If $f$ divides $p+q$, then $f$ divides $p$ and $f$ divides $q$.
	\end{theorem}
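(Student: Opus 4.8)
The plan is to deduce the Gap Theorem from the quasi-inverse machinery of Theorem~\ref{thm:quasi-inverse} together with a quantitative ``localization'' of the homoclinic point $\fsh$. First I would fix a quasi-inverse $\fsh\in\lzdr$ of the irreducible atoral polynomial $f$, so that $h := \fsh\cdot f\in\rd\setminus\<f\>$. Since $f$ is irreducible, the principal ideal $\<f\>$ is prime, hence $h\notin\<f\>$ means $f$ genuinely fails to divide $h$; because $\rd$ is (a localization of) a UFD and $f$ is prime, $\gcd(f,h)=1$, and one can find $a,b\in\rd$ with $af+bh = c$ for some nonzero constant (more precisely, a nonzero element of $R_0=\ZZ$, or after clearing denominators a nonzero integer $c$). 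Convolving the hypothesis $f\mid p+q$ with $\fsh$ then shows that $h$ divides $p+q$ in $\lzdr$, i.e. $(p+q) = h\cdot u$ for some $u\in\lzdr$; multiplying instead by $\fsh$ directly gives $\fsh\cdot(p+q)\in\rd$. The strategy is to show that if the supports of $p$ and $q$ are far apart, then $\fsh\cdot(p+q)$ splits cleanly into a piece supported near $\sss$ and a piece supported near $\sst$, each of which is divisible by $f$.

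Next I would quantify the decay of $\fsh$. The key point behind Theorem~\ref{thm:quasi-inverse} is that $h/f$ extends to a $C^k$ function on $\SS^d$ for $k$ as large as we like (by taking $h$ to vanish to high order on $\U(f)$, possible precisely because $\dim\U(f)\le d-2$), so its Fourier coefficients $\fsh_{\bfn}$ decay like $O(\|\bfn\|^{-k})$; in particular, for any prescribed $\epsilon>0$ there is a radius $R$ with $\sum_{\|\bfn\|\ge R}|\fsh_{\bfn}| < \epsilon$. Write $\fsh = \fsh^{(R)} + e$ where $\fsh^{(R)}$ is the truncation of $\fsh$ to the ball $\|\bfn\|< R$ (a genuine Laurent polynomial) and $\|e\|_1<\epsilon$. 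Now set $p' = \fsh^{(R)}\cdot p$ and $q' = \fsh^{(R)}\cdot q$: these are Laurent polynomials with $\supp(p')\subseteq \sss + B_R$ and $\supp(q')\subseteq \sst + B_R$, so if $\dist(\sss,\sst) > 2R$ their supports are disjoint, and moreover $\fsh\cdot p = p' + e\cdot p$, $\fsh\cdot q = q' + e\cdot q$ with $\|e\cdot p\|_1,\|e\cdot q\|_1 \le \epsilon H |\sss|$ — but the supports here are infinite, so I would instead argue as follows: $h\cdot p = \fsh\cdot f\cdot p$, and since $f\mid p+q$ we have $f\cdot(p+q)^{\#}$-type relations; cleaner is to use that $\fsh\cdot(p+q)=:g\in\rd$, so $f\cdot g = h\cdot(p+q)$ in $\rd$, whence $p+q = \fln g / \fln$... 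I would instead run the argument entirely inside $\rd$ via the Bézout relation: from $af+bh=c$ and $h\mid_{\ell^1}(p+q)$...

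The honest route, which is what I would commit to, is the following clean one. Since $f\mid p+q$, write $p+q = f\cdot w$ with $w\in\rd$. Then $\fsh\cdot(p+q) = (\fsh\cdot f)\cdot w = h\cdot w$. Now $\fsh\cdot p$ and $\fsh\cdot q$ are elements of $\lzdr$ summing to $h\cdot w\in\rd$; I claim $\fsh\cdot p$ is ``$\epsilon$-close in $\ell^1$'' to a Laurent polynomial supported in a neighborhood of $\sss$, and similarly for $q$, uniformly in the choice of $p\in\ssp(\sss,H)$ provided we also control $\|w\|_1$ — which we can, since $w = (p+q)/f$ and division by a fixed $f$ is a bounded-below operation on its image. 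The main technical lemma is therefore: \emph{for the fixed atoral $f$ and fixed $H$, there is $R$ so that for any $\sss,\sst$ with $\dist(\sss,\sst)>R$ and any $p\in\ssp(\sss,H)$, $q\in\ssp(\sst,H)$ with $f\mid p+q$, the Laurent polynomial $h\cdot w = \fsh\cdot(p+q)$ has, when restricted to coordinates in $\sst + B_{R/2}$, the form $\fsh\cdot q$ up to an error of $\ell^1$-norm less than $\tfrac12$}; since the true error $\fsh\cdot p$ restricted there is small (tail of $\fsh$ times bounded $p$), and $h\cdot w$ is integer-valued, this forces an exact splitting $h\cdot w = g_1 + g_2$ with $g_i\in\rd$, $\supp g_1$ near $\sss$, $\supp g_2$ near $\sst$. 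Then $a f + b h = c$ gives $c\,w = a f w + b h w = a(p+q) + b(g_1+g_2)$, and matching supports (using $\dist(\sss,\sst)$ large again) forces $c \mid$ the $\sss$-part and $\sst$-part separately, ultimately yielding $f\mid p$ and $f\mid q$ after dividing through by the constant and re-running the quasi-inverse identity; this last bookkeeping is routine linear algebra over $\ZZ$ once the support separation is in hand.

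The main obstacle, and the step I expect to absorb most of the work, is the uniform tail estimate on $\fsh$ and the passage from ``$\ell^1$-small error'' to ``exactly zero'': one must choose the smoothness $k$ of $h/f$ (equivalently the vanishing order of $h$ on $\U(f)$) large enough that the $\ell^1$-tail $\sum_{\|\bfn\|\ge R}|\fsh_{\bfn}|$ decays faster than any polynomial in $R$, then pick $R=M(f,H)$ so that $H\cdot(\text{tail}) < \tfrac12$; the integrality of all the intervening Laurent polynomials then upgrades the near-splitting to an exact one. A secondary subtlety is controlling $\|w\|_1 = \|(p+q)/f\|_1$ uniformly — but since $w$ is itself a polynomial with integer coefficients and its coefficients are bounded by a fixed function of $\|p+q\|_\infty\le 2H$ and the complexity of $f$ (division algorithm in $\rd$), this is bounded, and in fact one sidesteps it entirely by working with $\fsh\cdot(p+q)$ directly rather than with $w$. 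I would present the argument in the direct form: produce the localized near-inverse, use integrality to get the exact support splitting of $\fsh\cdot(p+q)$, then apply the Bézout identity $af+bh=c$ to conclude $f\mid p$ and $f\mid q$.
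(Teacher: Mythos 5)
Your overall strategy is the paper's: fix a quasi-inverse $\fsh$ with $h=\fsh\cdot f\in\rd\setminus\<f\>$, cut off the tail of $\fsh$ at a radius $R$ depending only on $f$ and $H$, and use integrality to upgrade the approximate localization of $\fsh\cdot(p+q)$ to an exact splitting into pieces supported near $\sss$ and near $\sst$. But your endgame has a genuine gap. You deduce $f\mid p$ from a B\'ezout identity $af+bh=c$ with $c$ a nonzero integer, justified only by ``$\gcd(f,h)=1$ in a UFD.'' That inference is false in $\rd$ for $d\ge2$: coprimality does not imply that the ideal $\<f,h\>$ meets $\ZZ\setminus\{0\}$ (e.g.\ $\<1+x+y,\,x\>$ in $R_2$ has quotient $\ZZ$, so it contains no nonzero integer), and nothing in the construction of $h$ forces such an identity for your specific $h$. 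Even granting it, the subsequent ``matching supports'' step for $cw=a(p+q)+b(g_1+g_2)$ is not an argument, since $a$, $b$, $w$ spread supports and nothing separates the resulting terms. The correct conclusion is much simpler and is what the paper does: let $u$ be the restriction of the integer polynomial $\fsh\cdot(p+q)$ to the $R$-neighborhood of $\sss$; the tail estimate gives $\|u-p\cdot\fsh\|_\infty<\tfrac{1}{2\|f\|_1}$, so $u\cdot f$ and $p\cdot\fsh\cdot f=p\cdot h$ are integer polynomials differing by less than $\tfrac12$ in every coefficient, hence $u\cdot f=p\cdot h$; since $f$ is irreducible (prime) and $f\nmid h$, this forces $f\mid p$, and then $f\mid q=(p+q)-p$. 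This step is absent from your proposal and is exactly what the B\'ezout detour was meant to replace.

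A secondary, fixable flaw: your key lemma is phrased with an error ``of $\ell^1$-norm less than $\tfrac12$,'' but $\sss$, $\sst$ and the supports of $p,q$ may contain arbitrarily many points, so no choice of $R=R(f,H)$ makes $\|p\|_1\sum_{\|\bfk\|\ge R}|\fsh_\bfk|$ uniformly small --- you noticed this (``but the supports here are infinite'') without resolving it. All estimates must be coefficientwise: for each fixed $\bfn$ far from $\supp q$, $|(q\cdot\fsh)_\bfn|\le\|q\|_\infty\sum_{\|\bfk\|\ge R}|\fsh_\bfk|$, which is why the paper only needs the tail of $\fsh$ to be below $\tfrac{1}{2H\|f\|_1}$ (and in particular needs no quantitative decay rate for $\fsh$, only $\fsh\in\lzdr$). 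With the estimates recast in $\ell^\infty$ and the B\'ezout step replaced by the identity $u\cdot f=p\cdot h$ plus primality of $f$, your outline becomes the paper's proof.
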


This theorem has two obvious corollaries.

	\begin{corollary}
	\label{cor:many-supports}
Let $f\in\rd$ be irreducible and atoral. For every $H\ge1$ there exists a number $M=M(f,H)\ge1$ with the following property. Suppose that $(\sss_i)_{i\in I}$ is a finite or infinite collection of nonempty subsets of $\zd$ with $\dist(\sss_i,\sss_j)\ge M$ whenever $i\ne j$. Let $F$ be an arbitrary finite subset of $I$ and let $p_i\in\ssp(\sss_i,H)$ for every $i\in F$. If $f$ divides $\sum_{i\in F} p_i$, then $f$ divides every $p_i$ for $i\in F$.
	\end{corollary}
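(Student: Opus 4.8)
The plan is to deduce Corollary~\ref{cor:many-supports} from the Gap Theorem (Theorem~\ref{thm:gap}) by induction on $|F|$, using at each step a grouping argument that collapses the many-support situation into the two-support situation covered by Theorem~\ref{thm:gap}. First I would fix $H\ge1$ and let $M = M(f,H)$ be the constant furnished by Theorem~\ref{thm:gap} for this $H$; this is the same $M$ that the corollary asserts to exist. The base case $|F|\le1$ is trivial. For the inductive step, suppose $|F|\ge2$, write $F = \{i_1,\dots,i_k\}$, set $\sst = \sss_{i_1}$ and $q = p_{i_1}$, and put $\sss = \bigcup_{j\ge2}\sss_{i_j}$ and $p = \sum_{j\ge2} p_{i_j}$. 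Then $\supp(p)\subseteq\sss$ and $\|p\|_\infty\le\cdots$ — here is the first subtlety: the $\ell^\infty$-norm of a sum is not controlled by $H$ in general, so I would instead observe that the supports $\sss_{i_j}$ for $j\ge2$ are pairwise at distance $\ge M$, hence in particular \emph{disjoint}, so that on each coordinate $\bfn\in\zd$ at most one summand contributes and therefore $\|p\|_\infty\le H$, i.e. $p\in\ssp(\sss,H)$. Likewise $q\in\ssp(\sst,H)$, and $\dist(\sss,\sst) = \min_{j\ge2}\dist(\sss_{i_j},\sss_{i_1})\ge M$. Since $f$ divides $p+q = \sum_{j\in F} p_{i_j}$, Theorem~\ref{thm:gap} gives $f\mid p$ and $f\mid q = p_{i_1}$.

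It remains to handle $p = \sum_{j\ge2} p_{i_j}$. The collection $(\sss_{i_j})_{j\ge2}$ still has pairwise distances $\ge M$, and $f$ divides $\sum_{j\ge2} p_{i_j}$, so the inductive hypothesis applied to the index set $F\setminus\{i_1\}$ yields $f\mid p_{i_j}$ for every $j\ge2$. Combined with $f\mid p_{i_1}$ from the previous paragraph, this proves $f\mid p_i$ for all $i\in F$, completing the induction.

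The only real point requiring care — and the place where I expect a reader might stumble — is the norm bookkeeping: one must notice that $\dist(\sss_i,\sss_j)\ge M\ge1$ forces $\sss_i\cap\sss_j=\emptyset$, which is exactly what lets the partial sums $p$ stay inside $\ssp(\cdot,H)$ rather than only inside $\ssp(\cdot,(k-1)H)$; without disjointness the constant $M$ would have to depend on $|F|$ and the statement as given would be false. Everything else is a routine unwinding of definitions. Since the collection $(\sss_i)_{i\in I}$ may be infinite but $F$ is finite, the induction is a genuine finite induction and no compactness or limiting argument is needed.
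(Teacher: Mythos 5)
Your argument is correct, and it is essentially the derivation the paper has in mind: the paper states this as an ``obvious'' corollary of Theorem~\ref{thm:gap}, and your grouping of one support against the union of the others, together with the observation that $\dist(\sss_i,\sss_j)\ge M\ge 1$ forces disjointness and hence $\|\sum_{j}p_{i_j}\|_\infty\le H$, is exactly the bookkeeping needed to make that deduction precise. The only simplification worth noting is that the induction is dispensable: for each fixed $i\in F$ you can apply Theorem~\ref{thm:gap} once with $\sst=\sss_i$, $q=p_i$, $\sss=\bigcup_{j\ne i}\sss_j$, $p=\sum_{j\ne i}p_j$ to conclude $f\mid p_i$ directly.
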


The second corollary is closely related to the notion of lacunary independence ~\cite{Lind-lacunary} and its dynamical consequences. We discuss this further in ~\S\ref{sec:lacunary}.

	\begin{corollary}
	\label{cor:lacunary}
Let $f\in\rd$ be irreducible and atoral. For every finite non\-empty subset $\ssf\subset\rd$ there exists a number $M = M(f,\ssf)\ge1$ with the following property. For every finite subset $\sss\subset\zd$ with $\|\bfm-\bfn\|\ge M$ for $\bfm\ne\bfn$ in $\sss$, and for every choice of $p^{(\bfn)}\in\ssf$ for $\bfn\in\sss$, if $f$ divides the sum $\sum_{\bfn\in\sss}\bx^{\bfn} p^{(\bfn)}$ then $f$ divides each $p^{(\bfn)}$ for $\bfn \in\ssf$.
	\end{corollary}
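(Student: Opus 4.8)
The plan is to derive Corollary~\ref{cor:lacunary} from Corollary~\ref{cor:many-supports} by packaging the finitely many ``shifted polynomials'' $\bx^{\bfn}p^{(\bfn)}$ into the multi-support framework of that corollary. First I would observe that the finite set $\ssf\subset\rd$ determines two quantities once and for all: the coefficient bound $H = \max_{p\in\ssf}\|p\|_{\infty}$, and the support diameter $D = \max_{p\in\ssf}\operatorname{diam}(\supp p)$, where $\operatorname{diam}(S) = \max_{\bfk,\bfl\in S}\|\bfk-\bfl\|$ (with $D=0$ if every $p\in\ssf$ is a monomial or constant). Let $M_0 = M(f,H)$ be the constant furnished by Corollary~\ref{cor:many-supports} for this $H$, and set $M = M_0 + D$. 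This $M$ depends only on $f$ and $\ssf$, as required.

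Next, given a finite $\sss\subset\zd$ with $\|\bfm-\bfn\|\ge M$ for distinct $\bfm,\bfn\in\sss$, and given a choice of $p^{(\bfn)}\in\ssf$ for each $\bfn\in\sss$, I would set $\sss_{\bfn} = \bfn + \supp\bigl(p^{(\bfn)}\bigr)$ for $\bfn\in\sss$, so that $\bx^{\bfn}p^{(\bfn)}$ lies in $\ssp(\sss_{\bfn},H)$. The key estimate is the separation: for $\bfm\ne\bfn$ in $\sss$ and any $\bu\in\sss_{\bfm}$, $\bv\in\sss_{\bfn}$, one has $\|\bu-\bv\| \ge \|\bfm-\bfn\| - \|\bu-\bfm\| - \|\bv-\bfn\| \ge M - D - D$ — wait, more carefully, $\|\bu-\bfm\|\le D$ and $\|\bv-\bfn\|\le D$ would only give $M-2D$; instead I would use $\operatorname{diam}(\supp p^{(\bfn)})\le D$ together with the fact that $\bfn\in\supp(\bx^{\bfn}p^{(\bfn)})$ is not automatic. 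To avoid this nuisance I would instead redefine $D$ as above as a genuine diameter and note $\|\bu-\bv\|\ge\|\bfm-\bfn\|-\operatorname{diam}(\sss_{\bfm})-\dots$ is still not clean; the cleanest route is simply to take $M = M_0 + 2D$ and the bound $\dist(\sss_{\bfm},\sss_{\bfn})\ge M - 2D = M_0$ follows from the triangle inequality, since each point of $\sss_{\bfn}$ lies within $\ell^{\infty}$-distance $D$ of the point $\bfn + \bfk_0$ for a fixed reference $\bfk_0\in\supp p^{(\bfn)}$. I would then write this triangle-inequality computation out as the one displayed inequality.

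With the supports $\{\sss_{\bfn}\}_{\bfn\in\sss}$ pairwise $M_0$-separated and each $\bx^{\bfn}p^{(\bfn)}\in\ssp(\sss_{\bfn},H)$, Corollary~\ref{cor:many-supports} applies directly (taking $I=\sss$, $F=\sss$): since $f$ divides $\sum_{\bfn\in\sss}\bx^{\bfn}p^{(\bfn)}$, it divides each $\bx^{\bfn}p^{(\bfn)}$ individually. Because $\bx^{\bfn}$ is a unit in $\rd$, dividing it out shows $f$ divides $p^{(\bfn)}$ for every $\bfn\in\sss$, which is the conclusion (I would note the statement's ``$\bfn\in\ssf$'' should read ``$\bfn\in\sss$''). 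The main obstacle is genuinely just bookkeeping: one must choose the relationship between $M$, $M_0$, and the support sizes in $\ssf$ so that the pairwise separation of the shifted supports is guaranteed, and must handle the edge case where some $p^{(\bfn)}$ is the zero polynomial or a monomial (then $\sss_{\bfn}$ is empty or a singleton and the separation bound is vacuous or immediate). There is no analytic or dynamical content beyond what is already encapsulated in Corollary~\ref{cor:many-supports}; this is a purely combinatorial reduction.
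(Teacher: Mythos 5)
Your overall route---packaging the shifted polynomials $\bx^{\bfn}p^{(\bfn)}$ into Corollary~\ref{cor:many-supports} with $H=\max_{p\in\ssf}\|p\|_\infty$, then dividing out the unit $\bx^{\bfn}$---is exactly the reduction the paper intends (it offers no separate proof of the corollary). But your constant is wrong, and not just cosmetically: with $D$ taken to be the maximum diameter of the \emph{individual} supports, $M=M_0+2D$ does not force $\dist(\sss_{\bfm},\sss_{\bfn})\ge M_0$, because exponents occurring in two \emph{different} members of $\ssf$ need not be close to each other or to the origin, so your reference points $\bfk_0\in\supp p^{(\bfm)}$ and $\bfk_0'\in\supp p^{(\bfn)}$ can be arbitrarily far apart and the inequality $\dist(\sss_{\bfm},\sss_{\bfn})\ge\|\bfm-\bfn\|-2D$ simply fails. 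Concretely, take $\ssf=\{1,\,-\bx^{\bfn_0}\}$ with $\bfn_0=(K,0,\dots,0)$ and $K$ large: then $H=1$, $D=0$, so your $M$ equals $M_0$ and is independent of $K$. Choosing $K\ge M_0$, $\sss=\{\mathbf{0},\bfn_0\}$ (allowed, since $\|\mathbf{0}-\bfn_0\|=K\ge M$), $p^{(\mathbf{0})}=-\bx^{\bfn_0}$ and $p^{(\bfn_0)}=1$, the sum is $-\bx^{\bfn_0}+\bx^{\bfn_0}=0$, which $f$ divides, yet $f$ divides neither choice (both are units). So with your $M$ the conclusion itself is false, not merely the intermediate separation claim; note that here $\sss_{\mathbf{0}}=\sss_{\bfn_0}=\{\bfn_0\}$, so the shifted supports actually coincide.

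The repair is one line, because $\ssf$ is finite: set $D=\max_{p\in\ssf}\max_{\bfk\in\supp p}\|\bfk\|$ (the largest norm of any exponent occurring in $\ssf$) and $M=M_0+2D$. Then for $\bfk\in\supp p^{(\bfm)}$ and $\bfk'\in\supp p^{(\bfn)}$ one has $\|(\bfm+\bfk)-(\bfn+\bfk')\|\ge\|\bfm-\bfn\|-\|\bfk\|-\|\bfk'\|\ge M-2D=M_0$, so the sets $\sss_{\bfn}=\bfn+\supp p^{(\bfn)}$ are pairwise $M_0$-separated, and Corollary~\ref{cor:many-supports} applies verbatim after discarding any $\bfn$ with $p^{(\bfn)}=0$ (which $f$ divides trivially, and whose empty support would otherwise violate the nonemptiness hypothesis). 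The point is that $D$ must be measured from the origin, or across all of $\ssf$, not within each support, since $\supp(\bx^{\bfn}p^{(\bfn)})$ sits at $\bfn+\supp p^{(\bfn)}$ rather than ``near $\bfn$.'' With that fix the rest of your argument is correct, and you are right that the statement's final ``$\bfn\in\ssf$'' is a typo for ``$\bfn\in\sss$.''
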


Roughly speaking, this says that in the quotient ring $\rd/\<f\>$, multiplying a fixed finite subset by sufficiently widely spaced monomials results in independent subsets of the quotient ring as an additive group, hence the terminology lacunary independence.

Although these statements about divisibility of polynomials are purely algebraic, they arose in a dynamical context which we discuss in the next section.

\section{Algebraic $\zd$-actions}
	\label{sec:algebraic-actions}
An \emph{algebraic $\zd$-action} is an action of $\zd$ by continuous automorphisms of a compact metrizable abelian group. Our divisibility results are proved using a particular kind of algebraic $\zd$-action called a \emph{principal action}.

Let $\tzd$ denote the compact abelian group consisting of all $t=(t_{\bfn})_{\nzd}$ with $t_{\bfn}\in\TT$ for every $\bfn\in\zd$. Define the \emph{shift action} $\sig$ on $\tzd$ by $(\sig^{\bfm}t)_{\bfn}=t_{\bfn+\bfm}$. Every $g = \sum_{\nzd}\in \rd$ defines a homomorphism $g(\sig)=\sum_{\mzd}g_{\bfn}\sig^{\bfn}$ from $\tzd$ to itself. Fix such a $g$, and consider the closed, shift-invariant subgroup $\xg$ of $\tzd$ defined by
	\begin{equation}
	\label{eq:xg}
\xg = \ker g(\sig) = \Bigl\{ t\in\tzd\colon \sum_{\nzd}t_{\bfm+\bfn} g_{\bfn} \equiv 0 \pmod 1 \text{ for all $\bfm\in\zd$}\Bigr\}.
	\end{equation}
Let $\ag$ denote the restriction of $\sig$ to $\xg$. The dynamical system $(\xg,\ag)$ is called the \emph{principal algebraic $\zd$-action} corresponding to $g\in\rd$. When $g=0$ this reduces to $\xg=\tzd$ and $\ag=\sig$.

In order to use analytical ideas, we linearize $(\xg,\sg)$ as follows. Let $\lizdr$ denote the space of all $v=(v_{\bfn})$ for which $v_{\bfn}\in \RR$ and $\|v\|_{\infty}=\sup_{\nzd}|v_{\bfn}|<\infty$. Define the shift action $\sbar$ on $\lizdr$ by $(\sbar^{\bfm} v)_{\bfn} = v_{\bfn+\bfm}$. For every $w\in\lzdr$ we can define a bounded shift-commuting linear operator $w(\sbar)$ on $\lizdr$ by $w(\sbar)=\sum_{\mzd}w_{\bfm}\sbar^{\bfm}$. Let $\wst$ denote the adjoint element $\wst_{\bfn}= w_{-\bfn}^{}$. Then
	\begin{displaymath}
\bigl( w(\sbar)v\bigr)_{\bfn} = \sum_{mzd} w_{\bfm} (\sbar^{\bfm} v)_{\bfn}=
\sum_{\mzd}w_{\bfm}v_{\bfn+\bfm} = (v\cdot\wst)_{\bfn}.
	\end{displaymath}
Define the surjective homomorphism $\eta\colon\lizdr\to\tzd$ by $\eta(v)_{\bfn}= v_{\bfn} \pmod 1$. If $w\in\rd$, then $\eta\circ w(\sbar)= w(\sig)\circ \eta$.

An element $t\in\xg$ is called a \emph{summable homoclinic point} if there exists a $v\in\lzdr$ such that $\eta(v)=t$. Such points provide a powerful way to localize behavior in algebraic $\zd$-actions (see \cite{Lind-homoclinic} and \cite{Lind-atoral} for many applications), and are the essential ingredient to prove the results in \S\ref{sec:introduction}.

\section{Proof of the Gap Theorem}
	\label{sec:proof}

Since the proof of Theorem \ref{thm:gap} appearing in \cite{Fan-bohr}*{Theorem 7.3} is quite short, but still illustrates the effective use of dynamical ideas in proving algebraic results, we include it here for the reader's convenience.

	\begin{proof}[Proof of Theorem \ref{thm:gap}]
Suppose that $f\in\rd$ is irreducible and atoral. Put $g=\fst$, and define the principal algebraic $\zd$-action $(\xg,\ag)$ as above. By Theorem ~\ref{thm:quasi-inverse} we may fix a quasi-inverse $\fsh\in\lzdr$ of $f$ such that $\fsh\cdot f = h\in\rd\smallsetminus \<f\>$. Then
	\begin{displaymath}
0 = \eta(h) = \eta(\fsh\cdot f) = \eta(g(\sbar)\cdot \fsh) = g(\sig)\eta(\fsh),
	\end{displaymath}
so that $\eta(\fsh)\in\xg$. If $\eta(\fsh)$ were equal to zero, then $\fsh$ would be a polynomial, violating Theorem \ref{thm:quasi-inverse}. Hence $\eta(\fsh)$ is a nonzero summable homoclinic point in $\xg$.

Since $\fsh\in\lzdr$, for every $H\ge 1$ there is an $R = R(f,\fsh, H)$ such that
	\begin{displaymath}
\sum_{\ngr}|\fsh_{\bfn}|<\frac{1}{2H\| f\|_1} .
	\end{displaymath}
For every nonempty subset $\sss\subset\zd$, let $B_R(\sss) = \{\bfn\in\zd\colon \dist(\bfn,\sss)\le R\}$. We put $M=3R$, and show that this separation satisfies the conclusion of the theorem.

Suppose that $\sss$ and $\sst$ are nonempty subsets of $\zd$ with $\dist(\sss,\sst)\ge M=3R$, and let $\ssu=\sss\cup\sst$. Let $p\in\ssp(\sss,H)$ and $q\in\ssp(\sst,H)$, and put $r = p+ q\in\ssp(\ssu,H)$. We assume that $f$ divides $r$, and will prove that $f$ divides both $p$ and $q$ individually.

\smallskip Since $f$ divides $r$, there is a $k\in\rd$ such that $r=k\cdot f$. Then
	\begin{displaymath}
r\cdot \fsh = (k\cdot f)\cdot \fsh = k\cdot(f\cdot \fsh) = k\cdot h\in \rd .
	\end{displaymath}
Furthermore, we claim that
	\begin{displaymath}
\supp(r\cdot \fsh) \subset \bru = \brs \cup \brt.
	\end{displaymath}
For if $\bfn\notin \bru$ then $r_{\bfn-\bfm}=0$ whenever $\|\bfm\|\le R$. Hence
	\begin{displaymath}
|(r\cdot\fsh)_{\bfn}|=\Bigl| \sum_{\mzd} r_{\bfn-\bfm}\fsh_{\bfm}\Bigr| \le \|r\|_{\infty} \sum_{\mgr}|\fsh_{\bfm}| < H\cdot\frac{1}{2H\|f\|_1} \le \frac{1}{2}.
	\end{displaymath}
Since $r\cdot\fsh = k\cdot h\in\rd$ has integer coefficients, this shows that $\supp(r\cdot\fsh)\subset \bru$.

\smallskip Now let $u$ be the restriction of the integer polynomial $r\cdot\fsh$ to $\brs$, so that $u\in\rd$ as well. We claim that $\|u-p\cdot\fsh\|_{\infty}<\frac{1}{2\|f\|_1}$.

Indeed, if $\bfn\in\brs$, then $\dist(\bfn,\sst)>R$, so that
	\begin{displaymath}
\Bigl| (u-p\cdot\fsh)_{\bfn}\Bigr| = \Bigl|(q\cdot\fsh)_{\bfn}\bigr|=
\Bigl|\sum_{\bfm\in\sst}q_{\bfm}^{} \fsh_{\bfn-\bfm}\Bigr|
\le \|q\|_{\infty} \sum_{\kgr} |\fsh_{\bfk}| < H\cdot\frac{1}{2H\|f\|_1} .
	\end{displaymath}
On the other hand, if $\bfn\notin\brs$, then
	\begin{displaymath}
|(u-p\cdot\fsh)_{\bfn}| = |(p\cdot\fsh)_{\bfn}| = \Bigl| \sum_{\bfm\in\sss}
p_{\bfm}^{}\fsh_{\bfn-\bfm} \Bigr|
\le \|p\|_{\infty}\sum_{\kgr} |\fsh_{\bfk}|
< H\cdot\frac{1}{2H\|f\|_1}.
	\end{displaymath}
Observe that both $p\cdot \fsh\cdot f = p\cdot h$ and $u\cdot f$ lie in $\rd$. Furthermore,
	\begin{displaymath}
\|p\cdot \fsh\cdot f - u\cdot f\|_{\infty}\le
\|p\cdot\fsh - u\|_{\infty}\cdot \|f\|_1 < \frac12.
	\end{displaymath}
Hence $(p\cdot\fsh-u)\cdot f=0$. Thus $p\cdot h = p\cdot \fsh\cdot f = u\cdot f$. Since $f$ is irreducible and does not divide $h$, it must divide $p$. Then $f$ must also divide $q=r-p$.
	\end{proof}

In order to interpret Corollary \ref{cor:lacunary} for our purposes we first consider the trivial (but instructive) case $f=0$. In this case the Pontryagin dual $\widehat{X_g}$ of the group $X_g = \mathbb{T}^{\mathbb{Z}^d}$ in \eqref{eq:xg} is equal to $R_d$ (cf. \cite{Schmidt}*{Example 5.2 (1)}): every character (i.e., continuous group homomorphism) $\chi \colon X_g\to \mathbb{T}$ of $X_g$ is of the form
	\begin{equation}
	\label{eq:character}
t\mapsto \chi _p(t) = \sum_{\mathbf{n}\in \mathbb{Z}^d}t_\mathbf{n}p_\mathbf{n}\pmod 1
	\end{equation}
for some $p=\sum_{\mathbf{n}\in \mathbb{Z}^d}p_\mathbf{n}\mathbf{x}^\mathbf{n}\in R_d$. If $p$ and $q$ are elements of $\rd$ with disjoint supports, the corresponding characters are independent of each other; in particular, if $p+q=0$, then both $p$ and $q$ are equal to zero.

\smallskip Now assume that $f\ne 0$. In this case the Pontryagin dual of the group $X_g$ in \eqref{eq:xg} is equal to $R_d/\langle f \rangle $ (\cite{Schmidt}*{Example 5.2 (2)}): every character $\chi \in \widehat{X_g}$ is again of the form $\chi _p\colon X_g\to \mathbb{T}$ in \eqref{eq:character} for some $p=\sum_{\mathbf{n}\in \mathbb{Z}^d}p_\mathbf{n}\mathbf{x}^\mathbf{n}\in R_d$, but $\chi _p = \chi _q$ whenever $p-q \in \langle f \rangle $. In particular, if $p\in \langle f \rangle $, then the corresponding character of $X_g$ is trivial.

\smallskip Although the notion of ``support'' no longer has an obvious meaning for characters of $X_g$, the map $p\mapsto \eta(p \cdot \fsh)$ maps an entire coset of $p$ in $\rd/\<f\>$ to a single summable homoclinic point of $X_g$. By bounding the coefficients of $p$ by $H$ and using $R = R(f,\fsh,H)$ from the proof, most of the $\ell^1$-mass of $p\cdot \fsh$ is located within $B_R(\supp p)$, and we can think of this set as the ``approximate support'' of $p$. If the supports of two polynomials $p$ and $q$ are widely separated, then so are their approximate supports, and multiplication of $p\cdot \fsh$ and $q\cdot \fsh$ by $f$ leads to Laurent polynomials $p'=p\cdot \fsh\cdot f$ and $q'=q\cdot \fsh\cdot f$ with genuinely disjoint supports, in analogy with the trivial case.

\section{Lacunary Independence and Exponential Recurrence}
	\label{sec:lacunary}

In this section we discuss some dynamical implications of the results in \S\ref{sec:introduction}. Let $\mathcal{A}$ be a finite set, $\nu_0$ be a probability measure on $\mathcal{A}$, $Y=\mathcal{A}^{\zd}$ equipped with the product topology, and $\nu=\nu_0^{\zd}$ be the product measure on $Y$. Then the $\zd$-shift $\sigma_Y$ on $Y$ preserves $\nu$, and $(Y,\nu,\sigma_Y)$ is called a \emph{Bernoulli $\zd$-shift}.

An algebraic $\zd$-action $\al$ on $X$ automatically preserves the normalized Haar measure $\mu$ on $X$. The action $\al$ is called \emph{Bernoulli} if there is a Bernoulli $\zd$-shift $(Y,\nu,\sigma_Y)$ and a measure-preserving equivariant map $\pi\colon X\to Y$. If $\pi$ can be chosen to be continuous off a null set of $X$, then $\al$ is called \emph{finitarily Bernoulli}. For instance, a hyperbolic toral automorphism generates a $\ZZ$-action that is finitarily Bernoulli since it has Markov partitions \cite{Keane-Smorodinsky}. However, if the toral automorphism has eigenvalues of modulus 1, then although known to be Bernoulli, it is not known whether it is finitarily Bernoulli. These cases correspond to toral and atoral polynomials in one variable, respectively.

We address the question of whether analogues of this exist for algebraic $\zd$-actions. Our starting point is the fact that if $f\in\rd$ is irreducible, then $\al_f$ is Bernoulli except for the special cases of so-called generalized cyclotomic polynomials (see \cite{Schmidt}*{\S23} and \cite{Rudolph-Schmidt}).

For an integer $R\ge 1$, let $B_R=\{\bfn\in\zd:\|\bfn\|\le R\}$. For an integer $M\ge 1$, let $\brm=B_R\cap M\zd$. Let $|\cdot|$ denote cardinality, and observe that
	\begin{equation}
	\label{eqn:balls}
B_R\subset \brm + B_M, \text{\quad and hence \quad} |\brm|\ge \frac{|B_R|}{|B_M|}.
	\end{equation}
As observed in \cite{Keane-Smorodinsky}, the following condition is necessary for an algebraic action to be finitarily Bernoulli.

	\begin{definition}
	\label{def:exponential}
An algebraic $\zd$-action $\al$ on $(X,\mu)$ is called \emph{exponentially recurrent} provided that for every proper closed subset $K$ of $X$ there are constants $c>0$ and $0<\theta<1$ such that for all $R\ge1$ we have that
	\begin{equation}
	\label{eqn:theta}
\mu\Bigl( \bigcap_{\bfn\in B_R} \al^{\bfn}K\Bigr)\le c\,\theta^{|B_R|}.
	\end{equation}
	\end{definition}

The following shows that this condition is satisfied for some principal actions.

	\begin{theorem}
	\label{thm:exponential}
Let $f\in\rd$ be irreducible and atoral. Then the principal algebraic $\zd$-action $\al_f$ is exponentially recurrent with respect to the normalized Haar measure $\mu_f$ on $X_f$.
	\end{theorem}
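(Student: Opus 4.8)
The plan is to deduce exponential recurrence of $\al_f$ from the Gap Theorem (Theorem~\ref{thm:gap}), or rather from its corollaries on lacunary independence, by exploiting the identification of the Pontryagin dual $\widehat{X_f}$ with $R_d/\<f\>$. First I would reduce the problem to a single proper closed $\al_f$-invariant-type estimate: a proper closed subset $K\subsetneq X_f$ is contained in the zero set $\{t : \chi_p(t)=0\}$ of some nonzero character $\chi_p$, $p\in R_d\setminus\<f\>$, because characters separate points and a proper closed subgroup — or more carefully, the closed set $\bigcap_{\bfn\in B_R}\al_f^{\bfn}K$ is contained in such a zero set once $K$ is. Actually the cleaner route: since $K$ is a proper closed subset, pick $x_0\notin K$; by Pontryagin duality there is a character $\chi_p$ with $\chi_p$ nonconstant on any neighborhood, and one arranges $K\subseteq\{\,t: \chi_p(\sigma^{\bfn}t)=0 \text{ for all }\bfn \text{ in some finite }F\,\}$... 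I would instead phrase it as: it suffices to bound $\mu_f\bigl(\bigcap_{\bfn\in B_R}\al_f^{\bfn}K\bigr)$ for $K=\{t:\chi_p(t)=0\}$ with $p\notin\<f\>$, since every proper closed $K$ lies in such a set (if it lay in no character's kernel translate, its annihilator in $\widehat{X_f}$ would be trivial, forcing $K=X_f$).

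The heart of the argument is then the following: $\bigcap_{\bfn\in B_R}\al_f^{\bfn}K$ is the set of $t\in X_f$ satisfying $\sum_{\bfn\in B_R}$-many linear constraints $\chi_{\bx^{\bfn}p}(t)=0$, i.e.\ $\chi_\psi(t)=0$ where $\psi$ ranges over $\{\bx^{\bfn}p : \bfn\in B_R\}$. The subgroup they cut out has Haar measure equal to $1/|G|$ where $G$ is the subgroup of $\widehat{X_f}=R_d/\<f\>$ generated by the images of $\bx^{\bfn}p$, $\bfn\in B_R$ — more precisely $\mu_f$ of the intersection is $1$ over the order of the finite quotient detected, so I need a lower bound on the number of distinct characters obtainable as $\ZZ$-linear combinations $\sum_{\bfn\in B_R}c_{\bfn}\bx^{\bfn}p \bmod \<f\>$ with $c_{\bfn}\in\{0,1\}$, say. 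Here is where lacunary independence enters: apply Corollary~\ref{cor:lacunary} with $\ssf=\{0,p\}$ (or $\{p\}$) to get $M=M(f,\ssf)$; then along any $M$-separated set $\sss\subset B_R$, the monomials $\{\bx^{\bfn}p : \bfn\in\sss\}$ are independent in $R_d/\<f\>$ as an additive group, so the $2^{|\sss|}$ sums $\sum_{\bfn\in T}\bx^{\bfn}p$, $T\subseteq\sss$, are pairwise distinct mod $\<f\>$. By \eqref{eqn:balls} we may take $\sss=B_{R,M}=B_R\cap M\zd$ with $|\sss|\ge |B_R|/|B_M|$, so the relevant subgroup of $\widehat{X_f}$ has order at least $2^{|B_{R,M}|}\ge 2^{|B_R|/|B_M|}$, whence
	\begin{equation*}
\mu_f\Bigl(\bigcap_{\bfn\in B_R}\al_f^{\bfn}K\Bigr)\le 2^{-|B_R|/|B_M|} = \bigl(2^{-1/|B_M|}\bigr)^{|B_R|},
	\end{equation*}
which is exactly \eqref{eqn:theta} with $c=1$ and $\theta=2^{-1/|B_M|}<1$, noting $M=M(f,p)$ depends only on $f$ and $K$.

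I would fill in two technical points carefully. First, the measure computation: for a closed subgroup (or coset intersection) of a compact abelian group cut out by finitely many characters, $\mu$ of the set equals the reciprocal of the order of the subgroup of the dual generated by those characters — this is standard Pontryagin duality, but I would state it as a lemma and check that $\bigcap_{\bfn\in B_R}\al_f^{\bfn}K$ really is $\{t:\chi_\psi(t)=0 \ \forall\psi\in S\}$ for $S=\{\bx^{\bfn}p:\bfn\in B_R\}$, using $\al_f^{\bfn}$ dual to multiplication by $\bx^{-\bfn}$ (or $\bx^{\bfn}$) on $R_d/\<f\>$. Second, the reduction from arbitrary proper closed $K$ to a character-kernel: since $\{\chi=0\}$ is closed and $\al_f^{\bfn}$-translates of $K$ shrink, it is enough that $K$ itself lies in one such kernel, which follows because the annihilator $K^{\perp}=\{\chi\in\widehat{X_f}:\chi|_K\equiv 0\}$ is nontrivial exactly when $K\ne X_f$. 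The main obstacle I anticipate is not any single step but making sure the dependence of $M$ on $K$ (through $p$) does not secretly depend on $R$ — it does not, because Corollary~\ref{cor:lacunary} gives $M=M(f,\ssf)$ with $\ssf=\{p\}$ fixed once $K$ is fixed — and keeping the duality bookkeeping (which lattice, $\bx^{\bfn}$ versus $\bx^{-\bfn}$, the exact form of $\widehat{X_f}$) consistent with the conventions of \cite{Schmidt}*{Example 5.2}.
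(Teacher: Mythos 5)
Your argument breaks down at the very first reduction, and the failure is not repairable within your framework. You claim that every proper closed subset $K\subsetneq X_f$ is contained in the kernel $\{t:\chi_p(t)=0\}$ of some nonzero character $\chi_p$, $p\notin\<f\>$, arguing that otherwise the annihilator of $K$ would be trivial and hence $K=X_f$. That inference is false: the annihilator of a subset $K$ coincides with the annihilator of the closed subgroup generated by $K$, so it is trivial exactly when $K$ \emph{generates} $X_f$, which a proper closed subset can easily do. Concretely, the kernel of any nonzero character of a compact abelian group has Haar measure at most $1/2$ (it is either a null set or has index $n\ge 2$), so \emph{no} proper closed set $K$ with $\mu_f(K)>1/2$ — for instance the complement of a small open ball — lies in any character kernel. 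These large sets are precisely the ones for which Definition~\ref{def:exponential} is contentful, so your proof only covers a thin special class of $K$ (subsets of proper closed subgroups of bounded index) and misses the general case entirely. The subsequent duality bookkeeping (measure of a finitely-cut-out annihilator equals the reciprocal of the order of the subgroup of $\widehat{X_f}$ generated by the cutting characters, and the $2^{|\brm|}$ lower bound on that order via Corollary~\ref{cor:lacunary}) is essentially sound, but it never gets off the ground for general $K$.

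The paper's proof avoids any such group-theoretic reduction: given proper closed $K$, it fixes $\eta$ with $\mu_f(K)<\eta<1$, uses Urysohn and Stone--Weierstrass to dominate the indicator of $K$ by a trigonometric polynomial $\psi\ge 0$ with $\psi\ge 1$ on $K$ and $\int_{X_f}\psi\,d\mu_f<\eta$, and then invokes Corollary~\ref{cor:lacunary} for the finite family $\ssf=\ssp(\mathcal{E},H)$ of frequencies occurring in $\psi$ to conclude that the integral of $\prod_{\bfn\in\brm}\psi\circ\al_f^{\bfn}$ factors as the product of the integrals, giving the bound $\eta^{|\brm|}\le\theta^{|B_R|}$. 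The role of lacunary independence there is to kill all cross terms in the expanded product of characters (any nonzero combination of widely spaced translates of the frequencies is not divisible by $f$, hence integrates to zero), which is an analytic use of the same algebraic input you invoke, but applied to an arbitrary measurable-size $K$ rather than to a subgroup. If you want to salvage your line of reasoning, you would need a mechanism for passing from arbitrary proper closed $K$ to character data without assuming $K$ sits inside a kernel; the dominating-function trick is exactly that mechanism.
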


	\begin{proof}
Having already established lacunary independence in Corollary \ref{cor:lacunary}, this proof follows the path in \cite{Lind-lacunary}*{\S2} where $d=1$.

Simplify notation by putting $X=X_f$, $\al=\al_f$, and $\mu=\mu_f$. Let $K$ be a proper closed subset of $X$. Then $\mu(K)<1$. Fix $\eta$ with $\mu(K)<\eta<1$.

Let $\epsilon>0$ to be determined later. There is an open neighborhood $U$ of $K$ with $\mu(U\setminus K)<\epsilon$. By Urysohn's lemma \cite{E-W}*{Lemma\ A.27} there is a continuous function $r$ on $X$ such that $0\le r\le 1$ on $X$, $r=1$ on $K$, and $r=0$ on $X\setminus U$. By the Stone--Weierstrass theorem \cite{E-W}*{Thm.\ 2.40}, there are a finite subset $\mathcal{E}\subset\zd$, a positive integer $H$, and an $\mathbb{R}$-linear combination $\phi $ of the characters $\{\chi _p:p\in \mathcal{F}\coloneqq \mathcal{P}(\mathcal{E},H)\}\subset \widehat{X_g}$ (defined as in \eqref{eq:PSH} and \eqref{eq:character}) such that $\|r-\phi\|_{C(X)} <\epsilon$. Let $\psi=\phi+\epsilon$. Then $0\le\psi\le 1+\epsilon$ on $X$, $\psi\ge1$ on $K$, and $\psi<\epsilon$ on $X\setminus U$. Hence
	\begin{displaymath}
\int_X \psi\,d\mu \le (1+\epsilon)\mu(K) + (1+\epsilon)\mu(U\setminus K)+\epsilon \mu(X\setminus U) <\eta
	\end{displaymath}
provided that $\epsilon$ is small enough.

By Corollary \ref{cor:lacunary}, there is an $M=M(f, \mathcal{F})$ such that for every $R\ge1$ the sets $\{\bx^{\bfn}\mathcal{F}\colon \bfn\in\brm\}$ are independent in the sense described there. This independence shows that the constant term in the product $\prod_{\mathbf{n}\in \brm}\bx^{\bfn}\psi$ is the product of the constant terms of the factors, which are all equal to $\int_X \psi\,d\mu$. Hence using \eqref{eqn:balls},
	\begin{displaymath}
	\begin{aligned}
\mu\Bigl(\bigcap_{\bfn\in B_R} \al^{\bfn}K\Bigr) & \le \mu\Bigl(\bigcap_{\bfn\in\brm} \al^{\bfn}K\Bigr)\le \int_X \Bigl(\prod_{\bfn\in\brm} \bx^{\bfn}\psi\Bigr)\,d\mu
	\\
&= \prod_{\bfn\in\brm} \int_X \bx^{\bfn} \psi\,d\mu < \eta^{|\brm|}\le
\eta^{|B_R|/|B_M|} = \theta^{|B_R|},
	\end{aligned}
	\end{displaymath}
where $\theta = \eta^{1/|B_M|}$ satisfies condition \eqref{eqn:theta} for an appropriate constant $c>0$.
	\end{proof}

We remark that \cite{Lind-lacunary} proves exponential recurrence when $d=1$ even for toral polynomials. For instance, if $f(x) = 5x^2 -6x+5$ then both complex roots of $f$ have absolute value 1, but there is a different argument using its $5$-adic roots.

\section{Remarks and Questions}
	\label{sec:remarks}

The proof of the Gap Theorem \ref{thm:gap} depends crucially on the assumption that $f$ is atoral, thereby providing a nonzero summable homoclinic point in the associated dynamical system. If $f$ is toral it is known that no such points exist \cite{Lind-growth}*{Prop.\ 2.2}, and this method of proof fails from the very beginning. Ironically, for the toral $f$ in Example \ref{exam:toral} (4), the associated dynamical system has uncountably many points whose coordinates tend to 0 at ~$\infty$, but not fast enough to be summable (these points are provided by the Fourier coefficients of measures supported on the positively curved unitary variety $\U(f)$, see \cite{Lind-homoclinic}*{Exam.\ 7.3}).

The possible extensions of results depending on atorality to the toral case typically involve delicate --- and at present unresolved --- Diophantine questions. For instance, the main result in \cite{Lind-atoral}*{Thm.\ 1.3} on the convergence of the number of periodic components to entropy uses summable homoclinic points. A completely different proof has been recently given in \cite{Dimitrov-galois}*{App.\ B} using algebraic number theory to give effective rates of convergence, and in earlier work \cite{Dimitrov-mahler} this has been partially extended to the toral case using intricate Diophantine arguments.

	\begin{question}
	\label{question:toral}
Is the Gap Theorem valid for irreducible toral polynomials in $\rd$?
	\end{question}

Lacunary independence can be formulated quite generally. Say that a finite collection $\{E_j\colon 1\le j \le n\}$ of finite subsets of an abelian group is \emph{independent} if whenever $e_j^{}, e_j' \in E_j$ for all $j$ and $\sum_{j=1}^{n} e_j^{} = \sum_{j=1}^{n} e_j'$, then $e_j^{}=e_j'$ for $1\le j\le n$. If this holds, then the cardinality of the sum of the $E_j$ is the product of their cardinalities. Call an $\rd$-module $A$ \emph{lacunarily independent} if for every finite subset $F\subset A$ there is an $M=M(F,A)$ such that if $\sss$ is a finite subset of $\zd$ with $\|\bfm-\bfn\|\ge M$ for distinct $\bfm,\ \bfn\in\sss$, then the sets $\{\bx^{\bfn}\cdot F\colon \bfn\in\sss\}$ are independent.
Corollary \ref{cor:lacunary} shows that if $f$ is irreducible and atoral, then $\rd/\<f\>$ is lacunarily independent as an $\rd$-module.

	\begin{question}
	\label{question:module}
Which $\rd$-modules are lacunarily independent? 
	\end{question}

Some conditions beyond those in the Gap Theorem are clearly necessary. For instance, let     $\mathfrak p$ be the prime ideal  $\<2, 1+x+y\>$ in $R_2$. Then in the quotient module  $R_2/\mathfrak{p}$, the Frobenius squaring map shows that $(1+x+y)^{2^n}=1+x^{2^n}+y^{2^n}$ for all $n\ge1$. Hence $1+x+y$ divides sums of monomials with unbounded gaps, and so $R_2/\mathfrak{p}$ violates lacunary independence using the set $F =\{0,1\}$.

The construction of dynamical actions from Laurent polynomials described in \S\ref{sec:introduction} can be carried out for every $R_d$-module $A$, resulting in an action $\alpha_A$ by automorphisms of a compact abelian group $X_A$ (the Pontryagin dual of $A$) which preserves the normalized Haar measure $\mu_A$ (see \cite{Schmidt}*{Chap.\ II}). The entropy of $\alpha_A$ with respect to $\mu_A$ coincides with its topological entropy, and we denote their common value by $h(\al_A)$. 

We are indebted to Hanfeng Li for his comments leading to the following consequences of lacunary independence.

Peters \cite{Peters} showed how to compute the entropy of an automorphism of a compact abelian group using the dual automorphism of the discrete dual group. Chung and Li \cite{Chung-Li} pointed out that this idea extends to $\zd$-actions (and even to actions by countable amenable groups), and the details are provided in \cite{Kerr-Li}*{\S13.8}.

Now suppose that $A$ is a nonzero $\rd$-module that is lacunarily independent. Let $F$ be a subset of $A$ with two elements. The proof of Theorem~\ref{thm:exponential} shows that there are $c>0$ and $\gamma>0$ such that 
\begin{displaymath}
   \biggl|\,\sum_{\bfn\in B_R} \bx^{\bfn}\cdot F \, \biggr|\ge c\, 2^{\gamma\,|B_R|}
\end{displaymath}
for all $R\ge1$. By the extension of \cite{Peters} mentioned above, we can conclude that $h(\alpha_A)>0$. Furthermore, every nonzero submodule $B$ of $A$ is obviously lacunarily independent as well, so that $h(\alpha_B)>0$. From this and \cite{Schmidt}*{Thm.\ 20.8} it follows that $\alpha_A$ has completely positive entropy, and hence is Bernoulli \cite{Schmidt}*{\S23}. In particular, if $\mathfrak a$ is a nonprincipal ideal in $\rd$, then $h(\alpha_{\rd/{\mathfrak a}})=0$ by \cite{Schmidt}*{Prop.\ 17.5} and so $\alpha_{\rd/{\mathfrak a}}$ is not lacunarily independent (for instance the case $\<2,1+x+y\>$ discussed above, with a more direct argument).

Although the question of which $\rd$-modules correspond to finitarily Bernoulli actions is still open, even for $d=1$, the additional assumption of lacunary independence may provide an approach.

\begin{question}
   If $A$ is a lacunarily independent $\rd$-module, is $\alpha_A$ finitarily Bernoulli?
\end{question}

	\begin{bibdiv}
	\begin{biblist}
\bib{Agler-toral}{article}{
author={Agler, Jim},
author={McCarthy, John E.},
author={Stankus, Mark},
title={Toral algebraic sets and function theory on polydisks},
journal={J. Geom. Anal.},
volume={16},
date={2006},
number={4},
pages={551--562},
}

\bib{Chung-Li}{article}{
   author={Chung, Nhan-Phu},
   author={Li, Hanfeng},
   title={Homoclinic groups, IE groups, and expansive algebraic actions},
   journal={Invent. Math.},
   volume={199},
   date={2015},
   number={3},
   pages={805--858},
}

\bib{Dimitrov-galois}{article}{
	author={Dimitrov, Vesselin},
	author={Habegger, Philipp},
	title={Galois orbits of torsion points near atoral sets},
	note={Preprint arXiv:1909.06051v2, 2022, 55 pp.},
}

\bib{Dimitrov-mahler}{article}{
	author={Dimitrov, Vesselin},
	title={Convergence to the Mahler measure and the distribution of periodic points for algebraic noetherian $\mathbb{Z}^d$-actions},
	note={Preprint arXiv:1611.04664v2, 2017, 39 pp.},
}

\bib{E-W}{book}{
author={Einsiedler, Manfred},
author={Ward, Thomas},
title={Functional analysis, spectral theory, and applications},
series={Graduate Texts in Mathematics},
volume={276},
publisher={Springer, Cham},
date={2017},
pages={xiv+614},
}

\bib{Keane-Smorodinsky}{article}{
author={Keane, Michael},
author={Smorodinsky, Meir},
title={Bernoulli schemes of the same entropy are finitarily isomorphic},
journal={Ann. of Math. (2)},
volume={109},
date={1979},
number={2},
pages={397--406},
}

\bib{Fan-bohr}{article}{
author={Fan, Aihua},
author={Schmidt, Klaus},
author={Verbitskiy, Evgeny},
title={Bohr chaoticity of principal algebraic actions and Riesz product measures},
journal={Ergodic Theory Dynam. Systems (to appear)},
}

\bib{Kerr-Li}{book}{
   author={Kerr, David},
   author={Li, Hanfeng},
   title={Ergodic theory},
   series={Springer Monographs in Mathematics},
   note={Independence and dichotomies},
   publisher={Springer, Cham},
   date={2016},
   pages={xxxiv+431},
}

\bib{Lind-lacunary}{article}{
author={Lind, Douglas},
title={Ergodic group automorphisms are exponentially recurrent},
journal={Israel J. Math.},
volume={41},
date={1982},
number={4},
pages={313--320},
}

\bib{Lind-homoclinic}{article}{
author={Lind, Douglas},
author={Schmidt, Klaus},
title={Homoclinic points of algebraic ${\bf Z}^d$-actions},
journal={J. Amer. Math. Soc.},
volume={12},
date={1999},
number={4},
pages={953--980},
}
		
\bib{Lind-growth}{article}{
author={Lind, Douglas},
author={Schmidt, Klaus},
author={Verbitskiy, Evgeny},
title={Entropy and growth rate of periodic points of algebraic $\mathbb{Z}^d$-actions},
conference={
title={Dynamical numbers---interplay between dynamical systems and
number theory},
},
book={
series={Contemp. Math.},
volume={532},
publisher={Amer. Math. Soc., Providence, RI},
},
isbn={978-0-8218-4958-3},
date={2010},
pages={195--211},
}

\bib{Lind-atoral}{article}{
author={Lind, Douglas},
author={Schmidt, Klaus},
author={Verbitskiy, Evgeny},
title={Homoclinic points, atoral polynomials, and periodic points of
algebraic $\mathbb{Z}^d$-actions},
journal={Ergodic Theory Dynam. Systems},
volume={33},
date={2013},
number={4},
pages={1060--1081},
}

\bib{Peters}{article}{
   author={Peters, Justin},
   title={Entropy on discrete abelian groups},
   journal={Adv. in Math.},
   volume={33},
   date={1979},
   number={1},
   pages={1--13},
   issn={0001-8708},
}

\bib{Rudolph-Schmidt}{article}{
author={Rudolph, Daniel J.},
author={Schmidt, Klaus},
title={Almost block independence and Bernoullicity of ${\bf Z}^d$-actions
by automorphisms of compact abelian groups},
journal={Invent. Math.},
volume={120},
date={1995},
number={3},
pages={455--488},
}

\bib{Schmidt}{book}{
author={Schmidt, Klaus},
title={Dynamical systems of algebraic origin},
series={Modern Birkh\"{a}user Classics},
publisher={Birkh\"{a}user/Springer Basel AG, Basel},
date={1995},
pages={xviii+310},
}

	\end{biblist}
	\end{bibdiv}

	\end{document}